\documentclass{conm-p-l}

\usepackage{amsthm,amsfonts, cite}
\usepackage{amssymb,graphicx,color, tikz-cd, tikz,float}
\usepackage[all]{xy}
\usepackage{eucal}
\usepackage{enumerate}
\usepackage{hyperref}
\numberwithin{equation}{section}

\theoremstyle{plain}
\newtheorem{teo}{Theorem}[section]
\newtheorem*{teo*}{Theorem}
\newtheorem{teoA}{Theorem}

\newtheorem*{cor*}{Corollary}
\newtheorem{lem}[teo]{Lemma}
\newtheorem*{lem*}{Lemma}
\newtheorem{prop}[teo]{Proposition}
\newtheorem*{prop*}{Proposition}

\theoremstyle{remark}
\newtheorem{obs}[teo]{Remark}

\newcommand{\R}{\ensuremath{{\mathbb{R}}}}

\renewcommand\ker{\mathrm{Ker}\,}

\makeatletter
\@namedef{subjclassname@2020}{\textup{2020} Mathematics Subject Classification}
\makeatother

\makeatletter
\newcommand*\KN{\mathpalette\@KN\relax}
\newcommand*\@KN[1]{%
  \mathbin{%
    \ooalign{%
      $#1\m@th\bigcirc$\cr
      \hidewidth$#1\m@th\wedge$\hidewidth\cr
    }%
  }%
}
\makeatother

\newcommand{\ovee}{\mathbin{\vcenter{\hbox{\ooalign{$\vee$\cr\hidewidth$\bigcirc$\hidewidth\cr}}}}}

\makeatletter
\newcommand{\cyclicsum}{\mathop{\mathpalette\cyclic@sum\relax}\limits}
\newcommand{\cyclic@sum}[2]{%
  \vcenter{\hbox{%
    \ifx#1\displaystyle\def\f{1.4}\else\def\f{1.0}\fi
    \begin{tikzpicture}[scale=\f, x=1em, y=1em, baseline=-0.1ex, line cap=round, line join=round]
      \tikzset{every path/.style={line width=\f*0.065em}}
      
      \draw (-0.35,0) -- (0.35,0);
      \draw (0,-0.35) -- (0,0.35);
      
      \draw (0,0.35) arc (90:-250:0.35);
      
      \fill[shift={(0,0.35)}, scale=\f] (0,0) -- (-0.11,0.06) -- (-0.07,0) -- (-0.11,-0.06) -- cycle;
    \end{tikzpicture}%
  }}%
}
\makeatother

\begin{document}

\title[Fedosov manifolds with parallel symplectic Weyl curvature]{Fedosov manifolds with parallel symplectic Weyl curvature}

\author[S. Rankin]{Shane Rankin}
\author[I. Terek]{Ivo Terek}

\address{\parbox{\linewidth}{Department of Mathematics \\ University of California, Riverside \\ Riverside, CA 92521, USA \\[-.5em] }}

\email{shane.rankin@email.ucr.edu}
\email{ivo.terek@ucr.edu}

\makeatletter
\@namedef{subjclassname@2020}{\textup{2020} Mathematics Subject Classification}
\makeatother

\keywords{symplectic connections $\cdot$ Fedosov manifolds $\cdot$ parallel Weyl tensor}
\subjclass[2020]{Primary: 53D05 Secondary: 53C05}

\begin{abstract}
A classical 1977 result by Roter states that a Riemannian manifold of dimension at least four with parallel Weyl curvature must be locally symmetric or conformally flat. We establish Roter's theorem in the symplectic category: namely, a Fedosov manifold with parallel symplectic Weyl curvature must be locally symmetric or Weyl-flat.
\end{abstract}

\maketitle

\section{Introduction}

By a \emph{Fedosov manifold} we mean, as usual, a triple $(M,\omega,\nabla)$ where $(M,\omega)$ is a connected symplectic manifold and $\nabla$ is a torsionfree connection on $M$ for which $\nabla\omega=0$ \cite{GRS_1998}; the connection is explicitly taken to be part of such geometric data because, unlike in the pseudo-Riemannian setting, it is not uniquely determined by $\omega$. Such connections, also called \emph{symplectic}, have played a central role in Fedosov's seminal work on formal deformation quantization of symplectic manifolds \cite{Fedosov_1994} and received considerable attention since then.

\medskip

The space of all four-times covariant tensors on a symplectic vector space $(V,\Omega)$ formally satisfying the symmetries of the curvature tensor $R$ of a symplectic connection admits a natural direct-sum decomposition into ${\rm Sp}(V,\Omega)$-irreducible factors, cf. \cite{Vaisman_1985}---some details, needed for later, are recalled in Section \ref{sec:inv-decomp}. This means that, for a Fedosov manifold as above, one may write $R = E+W$, where the \emph{symplectic Einstein tensor} $E$ is algebraically built from $\omega$ and the Ricci tensor of $\nabla$, while the \emph{symplectic Weyl tensor} $W$ is fully traceless.

\medskip

The zeroth-order conditions $E=0$ (equivalent to ${\rm Ric}=0$) and $W=0$ have been studied and are rather well-understood. There are many examples of Ricci-flat manifolds which are not Weyl-flat \cite[Theorem 4.1]{CG_2007}, as well as examples of Weyl-flat ones which are not Ricci-flat \cite[Corollary 9]{BC_2001}. The symplectic Weyl tensor is the only obstruction to integrability of a certain almost-complex structure built from $\nabla$ in the bundle of such $\omega$-compatible structures on $M$ \cite[Theorem 6.5]{BCGRS_2006}, while local models for real-analytic Weyl-flat manifolds are also known \cite[Theorem 4.3]{CGS_2005}. The first-order condition $\nabla {\rm Ric} = 0$ has also been investigated, and the local product structure of such manifolds has been determined, cf. \cite[Proposition 10]{CGR_1998} and \cite[Section 3]{Boubel_2003}.

\medskip

The remaining first-order condition, $\nabla W=0$, however, does not seem to have received any meaningful attention. This is not surprising from a Riemannian perspective: by a 1977 theorem due to Roter, a Riemannian manifold of dimension at least four having parallel Weyl curvature must be locally symmetric or conformally flat \cite[Theorem 2]{TensorNS77}. Thus, one may naturally wonder if Roter's theorem holds for Fedosov manifolds:
\begin{equation}\label{quest:roter}
  \parbox{.6\textwidth}{If a Fedosov manifold has parallel symplectic Weyl ten\-sor, must it be locally symmetric or Weyl-flat?}
\end{equation}
While \eqref{quest:roter} trivially holds for surfaces, with $W=0$ for dimensional reasons, its answer in general is not obvious.

\medskip On one hand, Roter's original proof via classical tensor calculus methods uses positive-definiteness of the metric in a crucial manner to ultimately conclude from $\|\nabla {\rm Ric}\|^2=0$ that $\nabla {\rm Ric} = 0$, and the result is outright false for indefinite metrics. The so-called \emph{essentially conformally symmetric (ECS)} manifolds---that is, those with $\nabla W=0$ without $W=0$ or $\nabla R=0$---have been largely investigated by Roter, Derdzinski, and the second author; see for instance \cite{Tohoku07, DR_2009, DT_2, AGAG23}, or \cite{Terek-survey} for a survey of more recent results. Without any positivity properties assumed for $\omega$, this suggests that the answer to \eqref{quest:roter} might be negative.

\medskip

On the other hand, from further results by Cahen--Gutt--Rawnsley, it turns out the answer to \eqref{quest:roter} is in fact positive when $\nabla$ is the Levi-Civita connection of a pseudo-K\"{a}hler metric having $\omega$ as its K\"{a}hler form, even when such pseudo-K\"{a}hler metric has indefinite signature. Indeed, recalling that a symplectic connection is called \emph{preferred} when it satisfies the gauge condition
  \begin{equation}
    (\nabla_X{\rm Ric})(Y,Z) +  (\nabla_Y{\rm Ric})(Z,X) +    (\nabla_Z{\rm Ric})(X,Y) =0
  \end{equation}for all vector fields $X,Y,Z$, cf. \cite[Section 3]{Bourgeois-Cahen_1999}, we have that $\nabla$ is preferred whenever $\nabla W = 0$ \cite[Remark 1]{CGR_1998}, while in the pseudo-K\"{a}hler case $\nabla$ is preferred if and only if $\nabla {\rm Ric}=0$ \cite[Proposition 2]{CGR_1998}.

  \medskip

  So, which is it? We settle the debate: the answer to \eqref{quest:roter} is \emph{always} positive.

\begin{teoA}\label{teo:symplectic-Roter} 
  A Fedosov manifold with parallel symplectic Weyl tensor must necessarily be locally symmetric or Weyl-flat.
\end{teoA}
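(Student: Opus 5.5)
We work with the decomposition $R = E + W$ recalled in Section \ref{sec:inv-decomp}. Here $E$ is built linearly from $\omega$ and the Ricci tensor ${\rm Ric}$, say
\[ E(X,Y,Z,T) = c\big(2\omega(X,Y){\rm Ric}(Z,T) + \omega(X,Z){\rm Ric}(Y,T) - \omega(Y,Z){\rm Ric}(X,T) - (Z\leftrightarrow T)\big) \]
with $c = -1/(2(n+1))$ in dimension $2n$. The constant is irrelevant for the argument. Assume $\nabla W = 0$, and write $A = \nabla{\rm Ric}$. Since $\nabla\omega = 0$, we get $\nabla R = \nabla E$, and $\nabla E$ is obtained from $E$ by replacing ${\rm Ric}$ with $\nabla_U{\rm Ric}$. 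So the task is to prove that at each point either $A = 0$ or $W = 0$, and then use connectedness. Recall that $\nabla W = 0$ forces $\nabla$ to be preferred, so $A$ is totally symmetric (by \cite[Remark 1]{CGR_1998}; we will re-derive this from the second Bianchi identity, since it is the contracted Bianchi identity that encodes $W$ being parallel). Consider the open set $U = \{A \neq 0\}$. Since $W$ is parallel, $\{W \neq 0\}$ is either empty or all of $M$. If $W \not\equiv 0$, it suffices to show $U = \emptyset$, because then $\nabla R = 0$.

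\textbf{Step 1: a Ricci identity.} Apply the Ricci identity to the parallel tensor $W$:
\[ (R(U,V)\cdot W) = 0, \quad\text{that is,}\quad W(R(U,V)X,Y,Z,T) + \dots = 0 . \]
Here $R = E + W$ acts as a derivation. Differentiating once more and using $\nabla W = 0$ yields
\[ (\nabla_S E)(U,V)\cdot W = 0 \]
for every $S$. In other words, for each $S$ the endomorphism-valued two-form built from $A_S := \nabla_S{\rm Ric}$ annihilates $W$. Explicitly, for fixed $S$ and $U,V$, the operator
\[ X \mapsto c\big(2\omega(U,V)\,a_S X + \omega(U,X)\,a_S V - \omega(V,X)\,a_S U + \omega(a_S U,X)V - \omega(a_S V,X)U\big) \]
lies in $\mathfrak{sp}(V,\Omega)$ and kills $W$, where $a_S$ is the $\omega$-symmetric endomorphism with ${\rm Ric}(S,\cdot)$ replaced by $A_S$.

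\textbf{Step 2: algebraic lemma.} Prove the following pointwise statement, which is the heart of the argument. Let $h$ be a nonzero symmetric bilinear form on $(V,\Omega)$, $\dim V \ge 4$, and suppose $W$ is a Weyl-type tensor such that $E_h(U,V)\cdot W = 0$ for all $U,V$. Then $W = 0$. The key observation is that $E_h(U,V)$, viewed as an element of $\mathfrak{sp}(V)\cong S^2V$, has the form $2\omega(U,V)h + (U \odot h(V)) - (V\odot h(U))$, up to identification. Varying $U,V$ generates a nonzero $\mathfrak{sp}$-submodule-like subspace. The annihilator of $W$ is a Lie subalgebra $\mathfrak{k}\subset\mathfrak{sp}(V)$. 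I would show that the span of the $E_h(U,V)$, together with brackets, generates all of $\mathfrak{sp}(V)$ when $h\neq 0$. For instance, taking $U,V$ with $h(V)=0$ and $\omega(U,V)=0$ gives elements $V\odot h(U)$, and brackets of such products generate $S^2V$. Then $\mathfrak{sp}(V)\cdot W = 0$, and since $W$ lies in a nontrivial irreducible $\mathfrak{sp}$-module, $W = 0$. Low-rank $h$, especially rank one, has to be handled separately; this case analysis is where I expect the main obstacle. For rank-one $h = \alpha\otimes\alpha$, the elements obtained include $u\odot \alpha^\sharp$ for all $u$, and their brackets $[u\odot a, v\odot a]$ are multiples of $\omega(u,a)\,v\odot a - \dots$. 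So they generate only a parabolic subalgebra. In that case I would instead use the full symmetry of $A$ in $S$ (from Step 1 with $S$ varying) together with traces: contracting $E_h(U,V)\cdot W = 0$ once with $\omega$ yields $W(h^\sharp\cdot,\cdot,\cdot,\cdot)$-type identities, which force $W(a,\cdot,\cdot,\cdot)=0$ and then, by tracelessness and irreducibility, $W=0$.

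\textbf{Step 3: conclusion.} At a point of $U$, Step 2 with $h = A_S \neq 0$ for some $S$ gives $W = 0$ there. Since $W$ is parallel, $W \equiv 0$ on $M$. Hence either $U = \emptyset$, so that $\nabla{\rm Ric} = 0$ and therefore $\nabla R = \nabla E = 0$ and $M$ is locally symmetric, or $W \equiv 0$. Surfaces are trivial.
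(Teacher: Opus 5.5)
\textbf{There is a genuine gap: the algebraic lemma in Step 2 is false.} Your Step 1 is fine and matches the paper, and so is the final use of the absence of $\mathfrak{sp}$-invariants. But Step 2 fails for a single nonzero $h$, exactly in the rank-one case you flagged. Take $h=\underline{u}\otimes\underline{u}$ with $u\neq 0$. Then
\[
(\Omega\ovee h)(x,y,\cdot,\cdot)=\underline{u}\odot\big(\underline{u}(y)\,\underline{x}-\underline{u}(x)\,\underline{y}+\Omega(x,y)\,\underline{u}\big),
\]
and the second factor vanishes on $u$. So every $E_h(x,y)$ corresponds to some $A\in\mathfrak{sp}(V,\Omega)$ with $Au=0$ and $A(u^\Omega)\subseteq\R u$; these span the Heisenberg nilradical of the stabilizer of $\R u$.

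Now pick $e\in u^\Omega\smallsetminus\R u$, which is possible since $2n\geq 4$, and set $K=(\underline{u}\wedge\underline{e})\otimes\underline{u}\otimes\underline{u}$. This $K$ satisfies (i)--(iii) of \eqref{eqn:defn-symp-curv}. It has ${\rm Ric}(K)=0$ because $\Omega(u,e)=0$, and it is nonzero. It is killed by every such $A$: indeed $A\cdot\underline{u}=\underline{Au}=0$ and $A\cdot\underline{e}=\underline{Ae}\in\R\underline{u}$, so $A\cdot K=(\underline{u}\wedge A\cdot\underline{e})\otimes\underline{u}\otimes\underline{u}=0$. Hence no generation argument can work for a single rank-one $h$. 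Moreover $K(u,\cdot,\cdot,\cdot)=0$, so your fallback ``$W(a,\cdot,\cdot,\cdot)=0$ plus tracelessness and irreducibility forces $W=0$'' is also false. This case cannot be dodged: in the actual geometry, with $U$ the vector field of Proposition \ref{prop:DRic-U}, $\nabla_U{\rm Ric}$ is a multiple of $\underline{U}\otimes\underline{U}$.

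\textbf{Your rank-one escape also rests on a misreading of ``preferred''.} Preferred means the cyclic sum of $\nabla{\rm Ric}$ vanishes, i.e.\ its \emph{totally symmetric part is zero}. It does not mean $\nabla{\rm Ric}$ is totally symmetric; the two together would force $\nabla{\rm Ric}=0$ and make the theorem trivial. Even granting total symmetry, $\nabla{\rm Ric}=\alpha^{\otimes 3}$ would make every $A_S$ a multiple of $\alpha\otimes\alpha$, and the $K$ above would satisfy all of your Step 1 constraints.

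\textbf{The missing ingredient is the structural identity for $\nabla{\rm Ric}$.} The paper extracts it from the differential Bianchi identity under $\nabla W=0$, by applying $\Lambda$ to $\omega\wedge\phi_Z+\underline{Z}\wedge\sigma_Z=0$ and polarizing:
\[
\nabla_Z{\rm Ric}=-\tfrac{1}{2n+1}\,\underline{U}\odot\underline{Z}.
\]
One then uses all $Z$ at once, not a single $h$. The span $\mathcal{S}$ of the forms $(\omega\ovee(\underline{U}\odot\underline{Z}))(X,Y,\cdot,\cdot)$ is stable under brackets with the stabilizer $\mathfrak{p}$ of $\R U$; this needs the terms $\underline{U}\odot\underline{AZ}$, hence every $Z$. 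Yet $\mathcal{S}$ is not contained in $\mathfrak{p}$, as already shown by taking $Z=X\notin U^\Omega$. Maximality of $\mathfrak{p}$ and simplicity of $\mathfrak{sp}_{2n}(\R)$ then show that $\mathcal{S}$ generates all of $\mathfrak{sp}_{2n}(\R)$.
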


In other words, there are no ``symplectic-ECS manifolds.'' The proof strategy we use to establish Theorem \ref{teo:symplectic-Roter} differs from the one adopted by Roter, and instead draws from the representation theory of the real symplectic Lie algebra.

\medskip

Here is a brief outline of the argument. First, the differential Bianchi identity combined with the condition $\nabla W = 0$ yields the existence of a vector field $U$ on $M$ such that $\nabla_Z{\rm Ric}$ is always proportional to $\underline{U} \odot \underline{Z}$ (Proposition \ref{prop:DRic-U}). Then, further differentiating the condition that $R(X,Y)\cdot W = 0$ for all $X,Y$ allows us to identify a certain subspace $\mathcal{S}$ of $\mathfrak{sp}_{2n}(\R)$ such that $\mathcal{S} \cdot W = 0$, cf. \eqref{eqn:subspace-S}. Finally, when $U \neq 0$, we use that $\mathfrak{sp}_{2n}(\R)$ is simple and show that the subalgebra of $\mathfrak{sp}_{2n}(\R)$ generated by $\mathcal{S}$ is in fact all of $\mathfrak{sp}_{2n}(\R)$ (Proposition \ref{prop:S-generates-spV}). The conclusion follows as then $\mathfrak{sp}_{2n}(\R)\cdot W=0$ can only happen when $W=0$ (Proposition \ref{prop:killed-by-derivations}).

\medskip

It does not seem like the strategy above can be used to give an alternative proof of Roter's original theorem. For instance, $\mathfrak{so}(4)$ is not simple, and there are nonzero Riemannian curvaturelike tensors which are annihilated by all of $\mathfrak{so}(p,q)$, cf. Remark \ref{obs:failure-so(p,q)}.

\medskip

\noindent {\bf Acknowledgements:} We would like to thank Wee Liang Gan for pointing to us reference \cite{Bourbaki_1975}, used in Proposition \ref{prop:S-generates-spV}. 

\section{Symplectic curvature tensors and Hamiltonian derivations}\label{sec:symp-curvature}

Throughout this section, let $(V,\Omega)$ be a $2n$-dimensional symplectic vector space. For every vector $x\in V$, we let $\underline{x} = \Omega(x,\cdot)$ be its image under the isomorphism $V\cong V^*$ induced by $\Omega$.

\subsection{The invariant decomposition.}\label{sec:inv-decomp}  By a \emph{symplectic curvature tensor} on $V$ we mean a four-times covariant tensor $K$ on $V$ satisfying
\begin{equation}\label{eqn:defn-symp-curv}\parbox{.9\textwidth}{
  \begin{enumerate}[(i)]
  \item $K(y,x,z,w) = - K(x,y,z,w)$, 
  \item $K(x,y,w,z) = K(x,y,z,w)$,
  \item $K(x,y,z,w)+K(y,z,x,w) + K(z,x,y,w) = 0$,
  \end{enumerate}}
\end{equation}for all $x,y,z,w\in V$, and we let $\mathcal{R}(V,\Omega)$ denote the vector space of all such symplectic curvature tensors. The \emph{symplectic Kulkarni-Nomizu product} of $F\in \scalebox{1.2}{$\wedge$}^2V^*$ and $G\in S^2 V^*$ is defined by
\begin{equation}\label{eqn:SKN}
  \begin{split}
    (F \ovee G)(x,y,z,w) &= G(y,z)F(x,w) - G(x,z)F(y,w) \\ &\quad +F(x,z)G(y,w)  - F(y,z)G(x,w)+2F(x,y)G(z,w),
  \end{split}
\end{equation}and is easily verified to satisfy \eqref{eqn:defn-symp-curv}. The \emph{Ricci contraction} of $K\in \mathcal{R}(V,\Omega)$ is defined as ${\rm Ric}(K)(y,z) = -{\rm tr}_\Omega((x,w) \mapsto K(x,y,z,w))$, where ${\rm tr}_\Omega\colon [V^*]^{\otimes 2}\to \R$ is described with the aid of a fixed basis for $V$ as ${\rm tr}_\Omega(B) = \Omega^{ij}B_{ij}$; here, $[\Omega^{ij}]$ is the inverse of a matrix representation $[\Omega_{ij}]$ of $\Omega$, with the convention that $\Omega^{ij}\Omega_{jk} = \delta^i_k$. A routine computation using \eqref{eqn:SKN} yields ${\rm Ric}(\Omega\ovee G) = 2(n+1)G$, implying that
\begin{equation}\label{eqn:defn_Weyl}
  \parbox{.8\textwidth}{the \emph{Weyl component} of $K$, given by $\displaystyle{W(K) = K -  \frac{\Omega\ovee {\rm Ric}(K)}{2(n+1)}}$,}
\end{equation}has vanishing Ricci contraction, and leading to the ${\rm Sp}(V,\Omega)$-invariant direct-sum decomposition $\mathcal{R}(V,\Omega) = {\rm Im}(\Omega\ovee \cdot) \oplus \ker({\rm Ric})$.

\subsection{Derivations and basic properties} For each integer $k\geq 1$, there is a natural representation of $\mathfrak{sp}(V,\Omega)$ on the space of covariant $k$-tensors on $V$, via derivations: namely, for any $A\in \mathfrak{sp}(V,\Omega)$ and $\Theta\in [V^*]^{\otimes k}$, we consider the tensor $A\cdot \Theta \in [V^*]^{\otimes k}$ defined by
\begin{equation}\label{eqn:defn-derivation}
  (A\cdot \Theta)(x_1,\ldots, x_k) = -\sum_{i=1}^k \Theta(x_1,\ldots, Ax_i,\ldots, x_k),
\end{equation}for all $x_1,\ldots, x_k\in V$. Some basic properties of this operation are
\begin{equation}\label{eqn:basic-derivations}
  \parbox{.9\textwidth}{
    \begin{enumerate}[(i)]
    \item $A\cdot \Omega=0$,
    \item $A\cdot(B\cdot \Theta) - B\cdot(A\cdot \Theta) = [A,B]\cdot \Theta$,
    \item $A\cdot (F\ovee G) = (A\cdot F)\ovee G + F \ovee (A\cdot G)$,
    \end{enumerate}
}
\end{equation}for all $A,B\in\mathfrak{sp}(V,\Omega)$, $\Theta \in [V^*]^{\otimes k}$, $F\in \scalebox{1.2}{$\wedge$}^2V^*$ and $G\in S^2 V^*$.

\medskip

Given $K\in [V^*]^{\otimes 4}$ and $x,y\in V$, we consider the endomorphism $\bar{K}(x,y)$ of $V$ characterized by requiring that $\Omega(\bar{K}(x,y)z,w) = K(x,y,z,w)$ for all $z,w\in V$. Note that $\bar{K}(x,y) \in \mathfrak{sp}(V,\Omega)$ whenever $K$ satisfies \mbox{(\ref{eqn:defn-symp-curv}-ii)}. The following auxiliary lemma will be used in Section \ref{sec:subspaces-kill-W}.

\begin{lem}\label{lem:commutator[A,K]}
  Whenever $A\in \mathfrak{sp}(V,\Omega)$ and $K\in [V^*]^{\otimes 4}$, it holds that
  \begin{equation}\label{eqn:commutator[A,K]}
    [A, \bar{K}(x,y)] = \overline{A\cdot K}(x,y) + \bar{K}(Ax,y)+\bar{K}(x,Ay)
  \end{equation}for all $x,y\in V$.
\end{lem}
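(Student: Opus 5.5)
The identity is a pointwise one, so the plan is to verify it by pairing both sides with $\Omega$. Since $\Omega$ is nondegenerate, it suffices to show that
\[
\Omega([A,\bar K(x,y)]z,w) = (A\cdot K)(x,y,z,w) + K(Ax,y,z,w) + K(x,Ay,z,w)
\]
for all $z,w\in V$. The right-hand side is read off directly from the defining relation $\Omega(\bar K(x,y)z,w)=K(x,y,z,w)$, applied once to $K$ and once to $A\cdot K$. For the left-hand side, expand the commutator as $A\bar K(x,y)z - \bar K(x,y)Az$.

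The only input about $A$ is that $A\in\mathfrak{sp}(V,\Omega)$, meaning $\Omega(Au,v)+\Omega(u,Av)=0$ for all $u,v\in V$. Using this, rewrite the first term:
\[
\Omega(A\bar K(x,y)z,w) = -\Omega(\bar K(x,y)z,Aw) = -K(x,y,z,Aw).
\]
The second term is immediate:
\[
-\Omega(\bar K(x,y)Az,w) = -K(x,y,Az,w).
\]
So the left-hand side equals $-K(x,y,Az,w)-K(x,y,z,Aw)$.

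Now expand the derivation using \eqref{eqn:defn-derivation} with $k=4$:
\[
(A\cdot K)(x,y,z,w) = -K(Ax,y,z,w)-K(x,Ay,z,w)-K(x,y,Az,w)-K(x,y,z,Aw).
\]
Adding $K(Ax,y,z,w)+K(x,Ay,z,w)$ cancels the first two terms. What remains is exactly $-K(x,y,Az,w)-K(x,y,z,Aw)$, which matches the left-hand side, so the identity holds.

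No symmetries of $K$ are needed, which is consistent with the statement allowing arbitrary $K\in[V^*]^{\otimes 4}$; the endomorphism $\bar K(x,y)$ is well defined for any such $K$ by nondegeneracy of $\Omega$. There is no genuine obstacle here. The only point needing care is the sign conventions: the minus sign in \eqref{eqn:defn-derivation}, and moving $A$ across $\Omega$ in the correct slot.
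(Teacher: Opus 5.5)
Your proof is correct and is essentially the paper's argument: you expand $A\cdot K$ via \eqref{eqn:defn-derivation}, use $A\in\mathfrak{sp}(V,\Omega)$ to identify $-K(x,y,Az,w)-K(x,y,z,Aw)$ with $\Omega([A,\bar K(x,y)]z,w)$, and conclude by nondegeneracy of $\Omega$. Your observation that no symmetries of $K$ are needed is also accurate.
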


\begin{proof}
The relation  
\begin{equation}\label{eqn:A-dot-K}
  \begin{split}
    (A\cdot K)(x,y,z,w) =  &-K(Ax,y,z,w) - K(x,Ay,z,w) \\ &-K(x,y,Az,w)-K(x,y,z,Aw),
  \end{split}    
\end{equation}immediate from \eqref{eqn:defn-derivation}, may be rewritten as
\begin{equation}\label{eqn:A-dot-K-2}
  \begin{split}
    \Omega( \overline{A\cdot K}(x,y)z,w) = &-\Omega(\bar{K}(Ax,y)z,w)-\Omega(\bar{K}(x,Ay)z,w) \\ &+ \Omega([A,\bar{K}(x,y)]z,w)
  \end{split}
\end{equation}by using that $A\in\mathfrak{sp}(V,\Omega)$ to combine the last two terms in the right side of \eqref{eqn:A-dot-K} into the term involving $[A,\bar{K}(x,y)]$. Eliminating $z,w$ from \eqref{eqn:A-dot-K-2} and rearranging it, we obtain \eqref{eqn:commutator[A,K]}.
\end{proof}

Finally, we show that the representation \eqref{eqn:defn-derivation} restricted to $\mathcal{R}(V,\Omega)$ has no nontrivial invariants.

\begin{prop}\label{prop:killed-by-derivations}
  If $K\in \mathcal{R}(V,\Omega)$ has $A\cdot K = 0$ for every $A\in \mathfrak{sp}(V,\Omega)$, then necessarily $K=0$.
\end{prop}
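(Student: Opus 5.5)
The plan is to test the invariance $A\cdot K=0$ against a single, very simple element of $\mathfrak{sp}(V,\Omega)$, namely the identity endomorphism of $V$. We cannot use it directly, since $\mathrm{Id}\notin\mathfrak{sp}(V,\Omega)$. Instead we use a block-diagonal element that acts as $+1$ on a Lagrangian subspace and $-1$ on a complementary one. Fix a Lagrangian splitting $V = L\oplus L'$ and let $A$ be the endomorphism with $A|_L = \mathrm{Id}$ and $A|_{L'} = -\mathrm{Id}$. Then $\Omega(Ax,y)+\Omega(x,Ay)=0$: for $x\in L$, $y\in L'$ the two terms are $\Omega(x,y)-\Omega(x,y)$, and pairs within $L$ or within $L'$ vanish because both subspaces are Lagrangian. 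Hence $A\in\mathfrak{sp}(V,\Omega)$.

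By \eqref{eqn:A-dot-K}, for vectors $x_1,\dots,x_4$ each lying in $L$ or $L'$ we get
\[(A\cdot K)(x_1,x_2,x_3,x_4) = -(\epsilon_1+\epsilon_2+\epsilon_3+\epsilon_4)\,K(x_1,x_2,x_3,x_4),\]
where $\epsilon_i=\pm1$ according to whether $x_i\in L$ or $x_i\in L'$. So $A\cdot K=0$ forces $K$ to vanish on every such quadruple except those with exactly two entries in $L$ and two in $L'$.

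To kill the remaining components, vary the splitting. Any nonzero vector lies in some Lagrangian subspace, and more usefully, $\mathrm{Sp}(V,\Omega)$ acts transitively on Lagrangian splittings. Since $A\cdot K=0$ for all $A$ in the Lie algebra, $K$ is invariant under the identity component of $\mathrm{Sp}(V,\Omega)$, which is all of it since the group is connected. Therefore, for \emph{every} Lagrangian splitting, $K$ vanishes off the balanced components.

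Now take arbitrary $x,y,z,w$ and aim to show $K(x,x,x,x)$-type evaluations vanish, then polarize. Concretely, fix a nonzero $v$ and choose a Lagrangian $L\ni v$ with complement $L'$. Then the quadruple $(v,v,v,v)$ lies entirely in $L$, so $K(v,v,v,v)=0$; more generally $K(x,y,z,w)=0$ whenever $x,y,z,w$ span an isotropic subspace. We then want to deduce $K=0$ from vanishing on all quadruples in some Lagrangian together with the symmetries. A cleaner route is the following. By (i)–(iii) of \eqref{eqn:defn-symp-curv}, $K$ is determined by the quartic-type data $K(x,y,x,y)$ (the standard polarization argument for curvature-like tensors, adapted to the symmetric last pair). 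Alternatively, $K$ is determined by the symmetric 4-form obtained after full symmetrization with Bianchi, since $\mathcal{R}(V,\Omega)\cong S^4V^*$ as $\mathfrak{sp}$-modules via $K\mapsto$ the map $x\mapsto K(x,\cdot,x,\cdot)$ symmetrized. The simplest concrete version: the map $K\mapsto Q_K(x,y)=K(x,y,x,y)$ is injective on $\mathcal{R}(V,\Omega)$.

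Granting injectivity, it remains to show $Q_K\equiv0$. If $x,y$ span an isotropic plane, $Q_K(x,y)=0$ by the above. If $\Omega(x,y)=1$, choose a Lagrangian splitting with $x\in L$, $y\in L'$. Then $(x,y,x,y)$ is balanced, so the eigenvalue argument is silent and we need more. We use a second Hamiltonian derivation: the nilpotent $B=\underline{x}\otimes x$, i.e.\ $Bz=\Omega(x,z)x$, which lies in $\mathfrak{sp}$. Then
\[0=(B\cdot K)(x,y,x,y)= -K(x,By,x,y)-K(x,y,x,By)=2K(x,x,x,y)+\dots,\]
and terms with a repeated $x$ in the antisymmetric slots vanish. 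Combining such identities with the dilation $A$ (which forces $K(x,y,x,y)$ into a one-dimensional invariant) should give $Q_K(x,y)=0$.

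The main obstacle is this last step: it is where the real content lies, namely that the balanced weight-zero component of $S^4$ under the torus is not $\mathfrak{sp}$-invariant. If the direct computation gets messy, the fallback is to use $\mathcal{R}(V,\Omega)\cong S^4V$, which is an irreducible nontrivial $\mathfrak{sp}$-module and so has no nonzero invariants. Equivalently, via polynomials, $K$ corresponds to a homogeneous quartic $p$ on $V$ annihilated by all Hamiltonian vector fields of quadratics. These span all linear vector fields preserving $\Omega$, acting transitively on $V\setminus\{0\}$, so $p$ is constant on $V\setminus\{0\}$, hence zero by homogeneity.
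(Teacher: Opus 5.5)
There is a genuine gap: the two facts that carry the proof are left unproved. First, the decisive step, $K(x,y,x,y)=0$ when $\Omega(x,y)\neq 0$, is never carried out. Your Lagrangian-splitting weight argument is correct but, as you observe, silent on balanced components. Evaluating $B=\underline{x}\otimes x$ at $(x,y,x,y)$ only gives $(B\cdot K)(x,y,x,y)=-\Omega(x,y)K(x,y,x,x)$; the term $K(x,x,x,y)$ you display vanishes by (i). That identity says nothing about $K(x,y,x,y)$, so ``should give'' is unsupported.

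The missing idea is the right evaluation point. Since $Bx=0$ and $By=\Omega(x,y)x$,
\[
-(B\cdot K)(x,y,y,y)=\Omega(x,y)\bigl[K(x,x,y,y)+K(x,y,x,y)+K(x,y,y,x)\bigr]=2\,\Omega(x,y)\,K(x,y,x,y).
\]
Hence $K(x,y,x,y)=0$ on the dense set where $\Omega(x,y)\neq 0$, and so everywhere by continuity. This is exactly the paper's first step, and it makes the torus argument unnecessary.

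Second, you grant that $K\mapsto Q_K$ is injective on $\mathcal{R}(V,\Omega)$. That is the other half of the proof, and it is not a verbatim copy of the Riemannian argument, since here $Q_K(y,x)=-Q_K(x,y)$. The paper proves it as follows. It polarizes $K(x,y,x,y)=0$ in $x$, then in $y$, and uses (i)--(iii) to get both $K(x,y,z,z)-2K(y,z,x,z)=0$ and $K(x,y,z,z)+2K(y,z,x,z)=0$. Hence $K(x,y,z,z)=0$, and $K=0$ by (ii).

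The fallback does not rescue this, because $\mathcal{R}(V,\Omega)\not\cong S^4V^*$. By the invariant decomposition recalled in the paper, $\mathcal{R}(V,\Omega)={\rm Im}(\Omega\ovee\cdot)\oplus\ker({\rm Ric})$ is reducible. Already for $2n=2$ it is $3$-dimensional, whereas $S^4V^*$ is $5$-dimensional. Moreover, full symmetrization kills every $K\in\mathcal{R}(V,\Omega)$, since $K(x,x,x,x)=0$ by (i). The quartic $p$ you attach to $K$ is therefore identically zero, and transitivity of ${\rm Sp}(V,\Omega)$ on $V\smallsetminus\{0\}$ gives no information about $K$.

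A transitivity argument can be made to work on $V\times V$ instead. An invariant $Q_K$ of bidegree $(2,2)$ must equal $a\,\Omega(x,y)^2$ wherever $\Omega(x,y)\neq 0$, and $Q_K(y,x)=-Q_K(x,y)$ then forces $a=0$. But this still relies on the injectivity of $K\mapsto Q_K$ that you left unproved.
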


\begin{proof}
  Due to \mbox{(\ref{eqn:defn-symp-curv}-ii)}, it suffices to show that $K(x,y,z,z) = 0$ for all vectors $x,y,z\in V$. We achieve this by establishing that
  \begin{equation}\label{eqn:Kpm2K}
    {\rm (i)}~K(x,y,z,z) - 2K(y,z,x,z) = 0,\quad {\rm (ii)}~K(x,y,z,z) + 2K(y,z,x,z) = 0,
  \end{equation}instead. With $\underline{x}\otimes x \in \mathfrak{sp}(V,\Omega)$, we may use \eqref{eqn:defn-derivation} together with \mbox{(\ref{eqn:defn-symp-curv}-i)} to directly compute that
  \begin{equation}\label{eqn:Om(x,.)x.K}
    0 = -((\underline{x}\otimes x) \cdot K)(x,y,y,y) = 2\Omega(x,y)K(x,y,x,y).
  \end{equation}As the set $\{(x,y) \in V\times V : \Omega(x,y) \neq 0\}$ is dense in $V\times V$ due to nondegeneracy of $\Omega$, continuity of $K$ leads via \eqref{eqn:Om(x,.)x.K} to $K(x,y,x,y) = 0$ for all $x,y\in V$. Polarizing this last relation in the variable $x$ we obtain
  \begin{equation}\label{eqn:first-polarization}
    K(x,y,z,y) + K(z,y,x,y) = 0,\quad\mbox{ for all }x,y,z\in V,
  \end{equation}and further polarizing \eqref{eqn:first-polarization} in the variable $y$ yields
  \begin{equation}\label{eqn:second-polarization}
    K(x,y,z,z)+K(x,z,z,y)+K(z,y,x,z) + K(z,z,x,y) = 0.
  \end{equation}
Applying \mbox{(\ref{eqn:defn-symp-curv}-i)} to the third and fourth terms, \mbox{(\ref{eqn:defn-symp-curv}-ii)} to the second term, and then \eqref{eqn:first-polarization} with the roles of $y$ and $z$ switched, \eqref{eqn:second-polarization} reduces to \mbox{(\ref{eqn:Kpm2K}-i)}. Obtaining \mbox{(\ref{eqn:Kpm2K}-ii)} as well is now straightforward: apply \mbox{(\ref{eqn:defn-symp-curv}-i)} and then \eqref{eqn:first-polarization} with the roles of $y$ and $z$ switched to the third term of \mbox{(\ref{eqn:defn-symp-curv}-iii)} with $w=z$.
\end{proof}

\begin{obs}\label{obs:failure-so(p,q)}
  Proposition \ref{prop:killed-by-derivations} does not hold in the pseudo-Riemannian setting. Namely, whenever $(V,g)$ is a pseudo-Euclidean vector space, the fundamental curvature tensor $R_0$ given by $R_0(x,y,z,w) = g(y,z)g(x,w)-g(x,z)g(y,z)$ has $A \cdot R_0 = 0$ for all $A\in\mathfrak{so}(V,g)$, and yet $R_0\neq 0$.
\end{obs}

\subsection{Subspaces ultimately annihilating $W$}\label{sec:subspaces-kill-W}

A substantial part of the proof of Theorem \ref{teo:symplectic-Roter} may be carried out in the general setting of symplectic vector spaces. We thus present it here first, and then invoke it in Section \ref{sec:proof-teoA} at the right moment. Observe that the mapping
\begin{equation}\label{eqn:isomorphism-spv}
\parbox{.82\textwidth}{$\mathfrak{sp}(V,\Omega) \ni A \mapsto \Omega(\cdot, A\cdot) \in S^2V^*$ is an isomorphism of vector spaces,}
\end{equation}being injective due to nondegeneracy of $\Omega$, and between spaces of the same dimension $n(2n+1)$. Then, given a vector $u\in V$, we consider
\begin{equation}\label{eqn:subspace-S}
\parbox{.615\textwidth}{the subspace $\mathcal{S}$ of $\mathfrak{sp}(V,\Omega)$ corresponding under \eqref{eqn:isomorphism-spv} to the subspace of $S^2V^*$ spanned by elements of the form $\,\Omega \ovee (\underline{u}\odot \underline{z})(x,y,\cdot,\cdot)\,$, with $x,y,z\in V$.} 
\end{equation}
It is in the next result that simplicity of $\mathfrak{sp}(V,\Omega)$ plays a crucial role. Below, for each subspace $L$ of $V$, we set $L^\Omega = \{x\in V : \Omega(x,y) = 0\mbox{ for all }y\in L\}$.
\begin{prop}\label{prop:S-generates-spV}
  If $u\neq 0$ and $2n\geq 4$, the subalgebra $\mathfrak{u}$ generated by the subspace $\mathcal{S}$ in \eqref{eqn:subspace-S} in fact equals all of $\mathfrak{sp}(V,\Omega)$.
\end{prop}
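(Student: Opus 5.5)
Under \eqref{eqn:isomorphism-spv}, $\mathfrak{sp}(V,\Omega)$ is identified with $S^2V^*$. Up to a nonzero constant, the commutator becomes the Poisson bracket of quadratic forms, where $[\underline{a}\odot\underline{b},\underline{c}\odot\underline{d}]$ is a combination of $\Omega(a,c)\,\underline{b}\odot\underline{d}$ and the three analogous terms. I will use this model throughout. The plan is to compute $\mathcal{S}$ explicitly, show that $\mathfrak{u}$ contains a large and easily described family of rank-two forms, show that $\mathfrak{u}$ is an ideal, and conclude by simplicity of $\mathfrak{sp}(V,\Omega)$.

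\emph{First step (explicit generators).} Expanding \eqref{eqn:SKN} with $F=\Omega$ and $G=\underline{u}\odot\underline{z}$, the quadratic form $\Omega\ovee G(x,y,\cdot,\cdot)$ is, up to constants,
\[
\underline{x}\odot G(y,\cdot)-\underline{y}\odot G(x,\cdot)+2\Omega(x,y)\,G,
\]
where $G(y,\cdot)$ is a combination of $\underline{u}$ and $\underline{z}$ with coefficients $\Omega(z,y)$ and $\Omega(u,y)$.

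Pick $v$ with $\Omega(u,v)=1$ and set $P=\mathrm{span}\{u,v\}$, so that $V=P\oplus P^\Omega$. Specializations give:
\begin{enumerate}[(a)]
\item $z=u$, $y=v$, $x\in P^\Omega$ yields $\underline{u}\odot\underline{x}\in\mathcal{S}$ for all $x\in P^\Omega$.
\item $z=u$, $x=u$, $y=v$ yields $\underline{u}\odot\underline{u}\in\mathcal{S}$.
\item $y=v$, $z\in P^\Omega$, $x\in P^\Omega$ yields $\underline{x}\odot\underline{z}$, modulo multiples of $\underline{u}\odot(\cdot)$ already obtained.
\end{enumerate}
The aim is to show that $\mathcal{S}$, and hence $\mathfrak{u}$, contains $\underline{u}\odot\underline{x}$ for all $x\in V$ and $\underline{x}\odot\underline{z}$ for all $x,z\in P^\Omega$. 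The latter family spans the subalgebra $\mathfrak{sp}(P^\Omega)$. Here $2n\ge4$ is used so that $P^\Omega\neq0$. Without it, the elements in (c) cannot exist and $\mathcal{S}$ is too small; this is consistent with the hypothesis.

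\emph{Second step (generation).} Bracketing $\underline{u}\odot\underline{x}$ with $\underline{u}\odot\underline{x}'$ for $x,x'\in P^\Omega$ only returns multiples of $\underline{u}\odot\underline{u}$. To escape the parabolic subalgebra stabilizing the line $\R u$, I need an element of $\mathcal{S}$ with a nonzero $\underline{v}$-component. I will look for one in the general formula: for instance $z=v$ together with suitable $x,y$ should produce terms like $\underline{x}\odot\underline{v}$ with $x\in P^\Omega$. Then
\[
[\underline{u}\odot\underline{x},\,\underline{v}\odot\underline{x}']
\]
with $\Omega(x,x')\neq0$ produces $\underline{u}\odot\underline{v}$ plus known terms, and brackets of $\underline{v}\odot\underline{x}$ with each other produce $\underline{v}\odot\underline{v}$.

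Once $\mathfrak{u}$ contains $\mathfrak{sp}(P^\Omega)$, all $\underline{u}\odot\underline{x}$, and at least one element with nonzero $\underline{v}\odot(\cdot)$-component, the argument finishes cleanly. I will show that $\mathfrak{u}$ is stable under bracketing with each of the spanning families
\[
\underline{u}\odot\underline{u},\quad \underline{u}\odot\underline{v},\quad \underline{v}\odot\underline{v},\quad \underline{u}\odot\underline{x},\quad \underline{v}\odot\underline{x},\quad \underline{x}\odot\underline{z},
\]
with $x,z\in P^\Omega$. This makes $\mathfrak{u}$ a nonzero ideal, and simplicity of $\mathfrak{sp}(V,\Omega)$ then forces $\mathfrak{u}=\mathfrak{sp}(V,\Omega)$. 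Alternatively, once all these generators are in $\mathfrak{u}$, they already span $S^2V^*$ directly.

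\emph{Main obstacle.} The delicate part is the bookkeeping in the first step: choosing $x,y,z$ to extract a clean element involving $\underline{v}$, that is, one not lying in the stabilizer of $\R u$. Every specialization drags along $\underline{u}\odot(\cdot)$ terms and the factor $2\Omega(x,y)\,G$. I would first try $z=v$, $y=v$, $x\in P^\Omega$, and compute which multiples of $\underline{v}\odot\underline{x}$ survive after subtracting the already-known elements $\underline{u}\odot\underline{x}$.
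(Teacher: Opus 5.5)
Your route is correct and genuinely different from the paper's. The computation you leave open works exactly as you guessed. Write $Q(x,y,z)$ for the form $(\Omega\ovee(\underline{u}\odot\underline{z}))(x,y,\cdot,\cdot)$. Your expansion then reads
\[
Q(x,y,z)=\Omega(u,y)\,\underline{x}\odot\underline{z}+\Omega(z,y)\,\underline{x}\odot\underline{u}-\Omega(u,x)\,\underline{y}\odot\underline{z}-\Omega(z,x)\,\underline{y}\odot\underline{u}+2\Omega(x,y)\,\underline{u}\odot\underline{z}.
\]

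\begin{enumerate}[(1)]
\item For $x\in P^\Omega$ this gives $Q(x,v,v)=\underline{x}\odot\underline{v}$ exactly. This lies outside the stabilizer $\mathfrak{p}$ of $\R u$ as soon as $x\neq 0$, and $2n\geq 4$ is what makes such $x$ available.
\item There is a small slip in (c): the leftover term is $-\Omega(z,x)\,\underline{u}\odot\underline{v}$, which (a) and (b) do not supply. Either add $Q(u,v,v)=4\,\underline{u}\odot\underline{v}$, or take $x=z$ in (c) and polarize.
\item For $x,x'\in P^\Omega$, the bracket $[\underline{v}\odot\underline{x},\underline{v}\odot\underline{x}']$, transported via \eqref{eqn:isomorphism-spv}, is a nonzero constant multiple of $\Omega(x,x')\,\underline{v}\odot\underline{v}$. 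One can choose $\Omega(x,x')\neq 0$ because $P^\Omega$ is a nonzero symplectic subspace.
\end{enumerate}

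So your ``alternative'' ending applies: the six families lie in $\mathfrak{u}$ and span $S^2V^*$, and the ideal/simplicity step is superfluous. Moreover, $Q(x,y,z)(u,u)=0$ identically. Hence $\mathcal{S}$ is precisely the hyperplane $\{A\in\mathfrak{sp}(V,\Omega) : \Omega(u,Au)=0\}$, and $\mathfrak{u}=\mathcal{S}+[\mathcal{S},\mathcal{S}]$.

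The paper never computes $\mathcal{S}$. It proceeds in four steps:
\begin{enumerate}[(1)]
\item It exhibits one element of $\mathcal{S}$ outside $\mathfrak{p}$; this is essentially your $Q(x,v,v)$, up to antisymmetry in the first two slots.
\item It observes that $[\mathfrak{p},\mathcal{S}]\subseteq\mathcal{S}$ by equivariance, since elements of $\mathfrak{p}$ only rescale $u$.
\item It deduces that $\mathfrak{u}+\mathfrak{p}$ is a subalgebra strictly containing $\mathfrak{p}$.
\item It invokes maximality of line stabilizers (Bourbaki) to make $\mathfrak{u}$ an ideal, and then simplicity of $\mathfrak{sp}(V,\Omega)$.
\end{enumerate}
That argument is shorter and basis-free, but it leans on two cited structural facts. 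Yours is self-contained linear algebra in the splitting $V=P\oplus P^\Omega$. It also gives sharper information: $\mathcal{S}$ already has codimension one, and a single bracket finishes the proof.
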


\begin{proof}
  Our initial claim is that
  \begin{equation}\label{eqn:S-not-in-p}
    \parbox{.8\textwidth}{$\mathcal{S} \not\subseteq \mathfrak{p}$, where $\mathfrak{p} = \{A\in\mathfrak{sp}(V,\Omega) : Au\in \R u\}$ is the stabilizer of $\R u$.}
  \end{equation}
  As $A\in \mathfrak{p}$ if and only if $\Omega(Au,w) = 0$ for all $w\in (\R u)^\Omega$, we establish \eqref{eqn:S-not-in-p} by exhibiting $x,y\in V$ and $w\in (\R u)^\Omega$ such that $(\Omega \ovee (\underline{u}\odot\underline{x}))(x,y,u,w) \neq 0$. Namely, as $u\neq 0$ we may take $x\not\in (\R u)^\Omega$, while the assumption that $2n\geq 4$ allows us to take $y\in (\R u)^\Omega \smallsetminus (\R u)$ and $w\in (\R u)^\Omega$ with $\Omega(y,w) \neq 0$. With such choices, \eqref{eqn:SKN} immediately leads to $(\Omega \ovee (\underline{u}\odot\underline{x}))(x,y,u,w)= \Omega(x,u)^2\Omega(y,w) \neq 0$, as required.

  The next step is to show that
  \begin{equation}\label{eqn:u+p}
    \parbox{.59\textwidth}{$[\mathfrak{p},\mathfrak{u}] \subseteq \mathfrak{u}$, so that $\mathfrak{u}+\mathfrak{p}$ is a subalgebra of $\mathfrak{sp}(V,\Omega)$.}
  \end{equation}Let $A\in \mathfrak{p}$ and $B\in \mathcal{S}$ correspond under \eqref{eqn:isomorphism-spv} to $(\Omega \ovee (\underline{u} \odot \underline{z}))(x,y,\cdot,\cdot)$, for some $x,y,z\in V$; write also $Au=\lambda u$, with $\lambda \in \R$. Combining the fact that $A\cdot \underline{v} = \underline{Av}$ for all $v\in V$, relations \mbox{(\ref{eqn:basic-derivations}-i)} and \mbox{(\ref{eqn:basic-derivations}-iii)}, and Lemma \ref{lem:commutator[A,K]}, it follows that the commutator $[A,B]$ corresponds under \eqref{eqn:isomorphism-spv} to the sum of the four terms
  \begin{equation}
\begin{split} \lambda (\Omega \ovee( \underline{u}\odot \underline{z}))(x,y,\cdot,\cdot)&,\quad (\Omega \ovee (\underline{u} \odot \underline{Az}))(x,y,\cdot,\cdot), \\ \quad (\Omega\ovee (\underline{u} \odot \underline{z}))(Ax,y,\cdot,\cdot),\quad &\mbox{and}\quad (\Omega\ovee (\underline{u} \odot \underline{z}))(x,Ay,\cdot,\cdot),\end{split}
  \end{equation}
so that $[A,B]\in \mathcal{S}$ by \eqref{eqn:subspace-S}, and hence $[\mathfrak{p},\mathcal{S}]\subseteq \mathcal{S}$. The Jacobi identity guarantees that $\mathfrak{g} = \{B \in \mathfrak{u} : [\mathfrak{p},B]\subseteq \mathfrak{u}\}$ is a subalgebra of $\mathfrak{u}$, while at the same time the previous argument yields $\mathcal{S} \subseteq \mathfrak{g}$. Thus $\mathfrak{g}=\mathfrak{u}$, establishing \eqref{eqn:u+p}.

  Finally, observe that $\mathfrak{p}$---being the stabilizer of a line---is maximal as a subalgebra of $\mathfrak{sp}(V,\Omega)$ \cite[pp. 201--202, (III)]{Bourbaki_1975}. As $\mathfrak{u}+\mathfrak{p}$ is a subalgebra of $\mathfrak{sp}(V,\Omega)$ by \eqref{eqn:u+p}, and contains $\mathfrak{p}$ strictly by \eqref{eqn:S-not-in-p}, we necessarily have that $\mathfrak{u}+\mathfrak{p} = \mathfrak{sp}(V,\Omega)$. This last relation, in turn, immediately implies that $\mathfrak{u}$ is an ideal of $\mathfrak{sp}(V,\Omega)$. With $u\neq 0$ and $\mathfrak{sp}(V,\Omega)$ being simple, cf. \cite[Theorem 6.105]{Knapp_2002}, it follows that $\mathfrak{u} = \mathfrak{sp}(V,\Omega)$ as required.
\end{proof}

\section{Differential identities involving the Ricci tensor}\label{sec:proof-teoA}

Hereafter, let $(M,\omega,\nabla)$ be a $2n$-dimensional Fedosov manifold, $R$ its curvature tensor, and $W = W(R)$ be its symplectic Weyl tensor as in \eqref{eqn:defn_Weyl}. As in Section \ref{sec:symp-curvature}, for any vector field $X$ on $M$ we let $\underline{X} = \omega(X,\cdot)$ be the corresponding $1$-form.

\medskip

Recall that, in addition to the symmetries \eqref{eqn:defn-symp-curv}, $R$ also satisfies the differential Bianchi identity ${\rm d}^\nabla R=0$, where ${\rm d}^\nabla$ is the covariant exterior derivative operator and $R$ is regarded as a \mbox{$2$-form} valued in ${\rm End}(TM)$. As $\nabla$ is torsionfree and parallelizes $\omega$, this identity may be rewritten in terms of the fully-covariant curvature tensor---with the convention that $R(X,Y,Z,S) = \omega(R(X,Y)Z,S)$---as
\begin{equation}\label{eqn:diff-Bianchi}
  (\nabla_XR)(Y,Z,S,T) + (\nabla_YR)(Z,X,S,T) + (\nabla_ZR)(X,Y,S,T) = 0.
\end{equation}
Furthermore, the covariant exterior derivative of ${\rm Ric}$ is given by
\begin{equation}\label{eqn:dRic}
  ({\rm d}^\nabla{\rm Ric})(X,Y)Z = (\nabla_X{\rm Ric})(Y,Z) - (\nabla_Y{\rm Ric})(X,Z),
\end{equation}if ${\rm Ric}$ is regarded as a $1$-form valued in $T^*M$.

\medskip

In \cite[Lemma 1(i)]{CGHR_2001} it was shown that, whenever $W=0$, there is a vector field $U$ on $M$ such that
\begin{equation}\label{eqn:U-field}
\nabla_Z{\rm Ric} = -\frac{1}{2n+1}\,\underline{U}\odot \underline{Z}  ,\quad\mbox{for all }Z\in\mathfrak{X}(M). 
\end{equation}Up to a constant, $U$ is dual to the divergence $1$-form $({\rm div}_\omega{\rm Ric})_k = \omega^{ij}\nabla_iR_{jk}$, which plays the role of ``differential of the scalar curvature,'' cf. \cite[Section 3]{Fox_2017}. A major step towards the proof of Theorem \ref{teo:symplectic-Roter} is to replace $W=0$ with the weaker condition $\nabla W=0$. In order to do so, for each $Z\in \mathfrak{X}(M)$ we consider the differential forms
\begin{equation}\label{eqn:phi-and-sigma}
  \parbox{.54\textwidth}{$\phi_Z \in \Omega^1(M)$ and $\sigma_Z\in \Omega^2(M)$, given by $\phi_Z = (\nabla{\rm Ric})(Z,Z)$ and $\sigma_Z = ({\rm d}^\nabla {\rm Ric})(\cdot,\cdot)Z$,}
\end{equation}
and explore relations between them.

\begin{lem}\label{lem:Bianchi-3-forms}
  If $\nabla W = 0$, then $\omega\wedge \phi_Z + \underline{Z} \wedge \sigma_Z = 0$ for all $Z\in \mathfrak{X}(M)$.
\end{lem}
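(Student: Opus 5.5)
The plan is to feed the hypothesis $\nabla W=0$ into the differential Bianchi identity \eqref{eqn:diff-Bianchi} and evaluate it with the last two slots equal to $Z$. By \eqref{eqn:defn_Weyl}, $R = W + \frac{1}{2(n+1)}\,\omega\ovee{\rm Ric}$. Since $\nabla\omega=0$ and $\nabla W=0$, and $\omega\ovee(\cdot)$ is bilinear and built from tensor products, we get
\[
\nabla_X R = \frac{1}{2(n+1)}\,\omega\ovee(\nabla_X{\rm Ric})
\]
for all $X$. Thus \eqref{eqn:diff-Bianchi} becomes the statement that the cyclic sum over $(X,Y,Z')$ of $(\omega\ovee \nabla_X{\rm Ric})(Y,Z',S,T)$ vanishes. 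Here I rename the first three arguments to avoid a clash with the fixed field $Z$.

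Next set $S=T=Z$ and expand using \eqref{eqn:SKN} with $G=\nabla_X{\rm Ric}$. Setting $z=w=Z$ makes the first and third terms of \eqref{eqn:SKN} coincide, and likewise the second and fourth, so
\[
(\omega\ovee G)(y,y',Z,Z)=2\big[G(y',Z)\,\omega(y,Z)-G(y,Z)\,\omega(y',Z)+\omega(y,y')\,G(Z,Z)\big].
\]
Summing cyclically over $(X,Y,Z')$, the last terms give
\[
\cyclicsum \omega(Y,Z')\,(\nabla_X{\rm Ric})(Z,Z) = \cyclicsum \omega(Y,Z')\,\phi_Z(X),
\]
which is $(\omega\wedge\phi_Z)(X,Y,Z')$ under the standard convention for wedge products of a $2$-form with a $1$-form.

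The remaining terms should be regrouped according to which of $X,Y,Z'$ appears inside $\omega(\cdot,Z)=-\underline{Z}(\cdot)$. For instance, the coefficient of $\omega(Y,Z)$ collects $(\nabla_X{\rm Ric})(Z',Z)$ and $-(\nabla_{Z'}{\rm Ric})(X,Z)$. By \eqref{eqn:dRic} this is exactly $\sigma_Z(X,Z')$. The cyclic sum of these contributions is therefore $\cyclicsum \omega(Y,Z)\,\sigma_Z(Z',X)$ up to an overall sign, which equals $\cyclicsum \underline{Z}(Y)\,\sigma_Z(Z',X) = (\underline{Z}\wedge\sigma_Z)(X,Y,Z')$. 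Dividing by the common nonzero factor $\frac{2}{2(n+1)}$ then yields $\omega\wedge\phi_Z+\underline{Z}\wedge\sigma_Z=0$.

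The only real obstacle is sign and normalization bookkeeping. One must check that the sign produced by $\omega(Y,Z)=-\underline{Z}(Y)$, together with the orientation of $\sigma_Z$ from \eqref{eqn:dRic}, gives $+\underline{Z}\wedge\sigma_Z$ rather than $-\underline{Z}\wedge\sigma_Z$. One must also confirm that the relative factor between the two groups of terms is $1$ for the wedge convention in use. I would verify this by evaluating both sides on a single triple, say at a point with $X,Y$ chosen so that only one cyclic term survives.
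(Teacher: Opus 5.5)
Your proposal is correct and follows the paper's proof: use $\nabla W=0$ to reduce the differential Bianchi identity to the cyclic sum of $\omega\ovee\nabla_X{\rm Ric}$, evaluate at $(\cdot,\cdot,Z,Z)$, and regroup the terms into $\omega\wedge\phi_Z$ and $\underline{Z}\wedge\sigma_Z$ via \eqref{eqn:dRic}. The sign you flagged does come out positive, since $\omega(Y,Z)\,\sigma_Z(X,Z') = \underline{Z}(Y)\,\sigma_Z(Z',X)$ (the two sign flips cancel). The relative factor is $1$ under any wedge convention, because both terms are wedges of a $2$-form with a $1$-form.
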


\begin{proof}
  Under the condition $\nabla W = 0$, as covariant derivatives naturally satisfy a Leibniz rule for the symplectic Kulkarni-Nomizu product, it follows from  \eqref{eqn:defn_Weyl} that \eqref{eqn:diff-Bianchi} reduces to
  \begin{equation}\label{eqn:Bianchi-reduced}
    \begin{split}
          (\omega \ovee \nabla_X{\rm Ric})(Y,Z,S,T) + &    (\omega \ovee \nabla_Y{\rm Ric})(Z,X,S,T) \\ &+    (\omega \ovee \nabla_Z{\rm Ric})(X,Y,S,T) =0.
    \end{split}
  \end{equation}
 By using \eqref{eqn:SKN}, grouping terms, and recognizing the definition of $\phi_Z$ in \eqref{eqn:phi-and-sigma}, we have that
  \begin{equation}\label{eqn:SKN-repeated}
    \begin{split}
          \frac{1}{2} (\omega \ovee \nabla_X{\rm Ric})(Y,T,Z,Z) &=  (\nabla_X{\rm Ric})(T,Z)\omega(Y,Z) \\ &\quad- (\nabla_X{\rm Ric})(Y,Z)\omega(T,Z)+\omega(Y,T) \phi_Z(X).
    \end{split}
  \end{equation}We now cyclically sum \eqref{eqn:SKN-repeated} over $X,Y,T$. The left side of \eqref{eqn:SKN-repeated} sums to zero by \eqref{eqn:Bianchi-reduced}, while the last term in the right side sums to $(\omega\wedge \phi_Z)(X,Y,T)$. The remaining six terms in the cyclic sum coming from the last two terms in \eqref{eqn:SKN-repeated} unaccounted for are grouped in pairs with the aid of \eqref{eqn:dRic} and, recognizing the definition of $\sigma_Z$ in \eqref{eqn:phi-and-sigma}, amount to $(\underline{Z}\wedge \sigma_Z)(X,Y,T)$ as required.
\end{proof}

To conclude this section, consider the Lefschetz and dual-Lefschetz operators $L\colon \Omega^k(M)\to \Omega^{k+2}(M)$ and $\Lambda\colon \Omega^k(M)\to \Omega^{k-2}(M)$, defined by
\begin{equation}\label{eqn:Lefschetz}
  L\alpha = \omega\wedge \alpha\quad\mbox{and}\quad \Lambda\alpha = \frac{1}{2}{\rm tr}_\omega((X,Y)\mapsto \alpha(X,Y,\cdot,\ldots,\cdot))
\end{equation}for $\alpha \in \Omega^k(M)$; see e.g. \cite[Section 3.2]{Shane_DGA}. A straightforward calculation (e.g., using Darboux coordinates) shows that
\begin{equation}\label{eqn:identities_Lambda}
  \Lambda(\omega \wedge \alpha) = (n-1)\alpha\quad\mbox{and}\quad \Lambda( \underline{X}\wedge \beta) = (\Lambda \beta) \underline{X} - \beta(X,\cdot)
\end{equation}whenever $\alpha\in \Omega^1(M)$ and $\beta\in\Omega^2(M)$.

\begin{prop}\label{prop:DRic-U}
  If $\nabla W = 0$, there is a vector field $U$ on $M$ for which \eqref{eqn:U-field} holds. In particular, $(M,\omega,\nabla)$ is locally symmetric if and only if $U=0$.
\end{prop}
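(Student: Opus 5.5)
Starting from Lemma \ref{lem:Bianchi-3-forms}, I apply $\Lambda$ to $\omega\wedge\phi_Z + \underline{Z}\wedge\sigma_Z = 0$ and use \eqref{eqn:identities_Lambda}. This gives
\[(n-1)\phi_Z + (\Lambda\sigma_Z)\,\underline{Z} - \sigma_Z(Z,\cdot) = 0.\]
Since $\sigma_Z(Z,\cdot) = (\nabla_Z{\rm Ric})(\cdot,Z) - (\nabla_\cdot{\rm Ric})(Z,Z) = (\nabla_Z{\rm Ric})(Z,\cdot) - \phi_Z$, this becomes
\[n\,\phi_Z = (\nabla_Z{\rm Ric})(Z,\cdot) - (\Lambda\sigma_Z)\underline{Z}.\]
Next I compute $\Lambda\sigma_Z$. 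By \eqref{eqn:dRic}, $\Lambda\sigma_Z = \omega^{ij}\nabla_i R_{jk}Z^k$ up to a sign, i.e.\ a multiple of $({\rm div}_\omega{\rm Ric})(Z)$. I would therefore define $U$ as the vector field dual, up to a constant, to the $1$-form ${\rm div}_\omega{\rm Ric}$. This matches the remark after \eqref{eqn:U-field}, so $\Lambda\sigma_Z = c\,\underline{U}(Z)$ for a universal constant $c$.

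The second ingredient is that $\nabla$ is preferred, i.e.\ $\nabla{\rm Ric}$ is totally symmetric. I can cite \cite[Remark 1]{CGR_1998}, or rederive it by evaluating Lemma \ref{lem:Bianchi-3-forms} more cleverly. Total symmetry gives $(\nabla_Z{\rm Ric})(Z,\cdot) = \phi_Z$. The identity above then becomes $(n-1)\phi_Z = -c\,\underline{U}(Z)\underline{Z}$, i.e.
\[(\nabla{\rm Ric})(Z,Z,X) = \kappa\,\underline{U}(Z)\underline{Z}(X)\]
for a constant $\kappa$, whenever $n\geq 2$. Polarizing in $Z$ and using total symmetry once more, the totally symmetric $3$-tensor $\nabla{\rm Ric}$ equals $\kappa$ times the full symmetrization of $\underline{U}\otimes\underline{Z}\otimes\underline{X}$. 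There is a consistency issue here: the cubic form $Z\mapsto \underline{U}(Z)\underline{Z}(Z)$ vanishes identically because $\underline{Z}(Z)=0$. So the argument must be done carefully, polarizing only in the first two slots and not plugging $X=Z$.

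The cleanest route is to avoid polarization subtleties altogether. I would rewrite the lemma's $3$-form identity in index form and take its full $\omega$-trace in a pair of indices. This yields directly
\[(\nabla_Z{\rm Ric})(X,Y) = a\,\underline{U}\odot\underline{Z}\,(X,Y) + (\text{terms that vanish by total symmetry}).\]
Matching the constant $-\tfrac{1}{2n+1}$ in \eqref{eqn:U-field} comes from normalizing $U$ via the trace: tracing $\nabla_Z{\rm Ric} = a\,\underline{U}\odot\underline{Z}$ in $Z$ and one other slot recovers ${\rm div}_\omega{\rm Ric}$ and fixes $a$. For $n=1$ the identity is automatic, since $W=0$ in dimension two and \cite[Lemma 1(i)]{CGHR_2001} applies.

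For the final claim, if $U=0$ then $\nabla{\rm Ric}=0$, so with $\nabla W=0$ and $\nabla\omega=0$, \eqref{eqn:defn_Weyl} gives $\nabla R = 0$. Conversely, if $\nabla R=0$ then $\nabla{\rm Ric}=0$, so $\underline{U}\odot\underline{Z}=0$ for all $Z$, forcing $U=0$. The main obstacle is the algebraic extraction of the precise form of $\nabla{\rm Ric}$ from the $3$-form identity, ensuring it pins down $\nabla_Z{\rm Ric}$ fully and not just its restriction to the diagonal. I would attempt this first by directly contracting the index form of Lemma \ref{lem:Bianchi-3-forms} before polarizing.
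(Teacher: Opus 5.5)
Your steps up to $n\phi_Z=(\nabla_Z{\rm Ric})(Z,\cdot)-(\Lambda\sigma_Z)\underline{Z}$ are correct and match the paper. The next step, however, rests on a misreading of ``preferred.''

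\textbf{The gap.} A connection is preferred when the \emph{cyclic sum} $(\nabla_X{\rm Ric})(Y,Z)+(\nabla_Y{\rm Ric})(Z,X)+(\nabla_Z{\rm Ric})(X,Y)$ vanishes. It does not mean that $\nabla{\rm Ric}$ is totally symmetric. Since $\nabla_X{\rm Ric}$ is already symmetric in its two slots, total symmetry together with the cyclic identity would give $3\nabla{\rm Ric}=0$. In fact the formula \eqref{eqn:U-field} you are trying to prove is not totally symmetric unless $U=0$: it gives $(\nabla_X{\rm Ric})(Z,Z)=\frac{2}{2n+1}\omega(U,Z)\omega(Z,X)$ but $(\nabla_Z{\rm Ric})(Z,X)=-\frac{1}{2n+1}\omega(U,Z)\omega(Z,X)$.

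The ``consistency issue'' you ran into is this contradiction surfacing. A totally symmetric $3$-tensor is determined by its cubic form, and your relation forces that cubic form to vanish. More careful polarization or a different contraction cannot repair this. Tracing the $3$-form identity in a pair of slots only reproduces $\Lambda$ of it, which you have already used, and the ``terms that vanish by total symmetry'' do not vanish. Even the coefficient is off: the correct relation has $n+\frac{1}{2}$ where you have $n-1$.

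\textbf{How to close it.} With the correct cyclic identity the argument finishes quickly. Setting $Y=Z$ in it gives $(\nabla_Z{\rm Ric})(Z,\cdot)=-\frac{1}{2}\phi_Z$, so $(2n+1)\phi_Z=-2(\Lambda\sigma_Z)\underline{Z}$. Defining $U$ by $\omega(U,Z)=-\Lambda\sigma_Z$, this reads $(\nabla_X{\rm Ric})(Z,Z)=\frac{2}{2n+1}\omega(U,Z)\omega(Z,X)$. Polarizing in $Z$ is legitimate here, since this is a quadratic form in the symmetric slots of $\nabla_X{\rm Ric}$. It yields \eqref{eqn:U-field} with the right constant, with no trace normalization needed.

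\textbf{Comparison with the paper.} That version uses \cite[Remark 1]{CGR_1998} as a black box. The paper does not. It polarizes $n(\nabla_X{\rm Ric})(Z,Z)=\omega(U,Z)\omega(Z,X)+(\nabla_Z{\rm Ric})(X,Z)$, combines it with its permutations in $X,Y,Z$, and solves the resulting linear system for $\nabla_Z{\rm Ric}$. This in effect proves preferredness along the way.

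\textbf{The case $n=1$.} Your $n=1$ remark is also wrong. In dimension two the Bianchi identity is vacuous and non-preferred symplectic connections exist, while \eqref{eqn:U-field} forces preferredness. So the proposition genuinely needs $n\geq 2$, as the paper's final division by $2-2n$ presupposes.
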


\begin{proof}
  The vector field $U$ is defined by requiring that $\omega(U,Z) = - \Lambda \sigma_Z$ for all $Z\in \mathfrak{X}(M)$, where $\Lambda$ and $\sigma_Z$ as in \eqref{eqn:Lefschetz} and \eqref{eqn:phi-and-sigma}; note that the dependence of $\Lambda\sigma_Z$ on $Z$ is indeed tensorial, making $U$ well-defined. Combining the definitions of $\phi_Z$ and $\sigma_Z$ with \eqref{eqn:dRic}, we see that
  \begin{equation}\label{eqn:sigma-versus-phi}
    \sigma_Z(Z,X) = (\nabla_Z{\rm Ric})(X,Z) - \phi_Z(X)
  \end{equation}for all $X,Z\in \mathfrak{X}(M)$. At the same time, applying $\Lambda$ to the identity obtained in Lemma \ref{lem:Bianchi-3-forms} and using \eqref{eqn:identities_Lambda}, we see that $(n-1)\phi_Z + (\Lambda \sigma_Z)\underline{Z}-\sigma_Z(Z,\cdot)=0$. Substituting \eqref{eqn:sigma-versus-phi} into this last relation and using the definition of $U$, it follows that $n (\nabla_X{\rm Ric})(Z,Z) = \omega(U,Z)\omega(Z,X)+(\nabla_Z{\rm Ric})(X,Z)$. Polarization in the variable $Z$ now yields
  \begin{equation}\label{eqn:2n-first}
    2n(\nabla_X{\rm Ric})(Y,Z) =  - (\underline{U} \odot \underline{X})(Y,Z) + (\nabla_Z{\rm Ric})(X,Y)+ (\nabla_Y{\rm Ric})(X,Z).
  \end{equation}Switching the roles of $X$ and $Y$ in \eqref{eqn:2n-first} and adding the resulting identity to \eqref{eqn:2n-first} itself, we obtain
  \begin{equation}\label{eqn:2n-second}
    (2n-1)(\nabla_X{\rm Ric}(Y,Z)+(\nabla_Y{\rm Ric})(X,Z)) = 2 (\nabla_Z{\rm Ric})(X,Y) +  (\underline{U}\odot \underline{Z})(X,Y).
  \end{equation}This time switching the roles of $X$ and $Z$ in \eqref{eqn:2n-first}, and substituting it into \eqref{eqn:2n-second}, it follows that
  \begin{equation}\label{eqn:2n-third}
    (2n-1)(2n \nabla_Z{\rm Ric} + \underline{U}\odot \underline{Z}) = 2\nabla_Z{\rm Ric} + \underline{U}\odot \underline{Z},
  \end{equation}as symmetric tensor fields. Dividing both sides of \eqref{eqn:2n-third} by $2-2n\neq 0$ and solving for $\nabla_Z{\rm Ric}$, we finally arrive at \eqref{eqn:U-field}.
\end{proof}

\section{Proof of Theorem \ref{teo:symplectic-Roter}}\label{sec:proof-teoA}

We continue with the setup from the previous section, and assume from here onwards that $2n\geq 4$ and $\nabla W=0$. The latter implies that $R(X,Y)\cdot W = 0$ for all $X,Y$, cf. \eqref{eqn:defn-derivation}, and further differentiation leads to $(\nabla_ZR)(X,Y)\cdot W = 0$ for all $X,Y,Z$. Taking into account \eqref{eqn:defn_Weyl} and Proposition \ref{prop:DRic-U}, we obtain
\begin{equation}\label{eqn:S-annihilates-W}
(\omega \ovee \big(\underline{U}\odot \underline{Z})\big)(X,Y)\cdot W = 0,\qquad\mbox{for all }X,Y,Z\in \mathfrak{X}(M).
\end{equation}
If $\nabla R \neq 0$, and hence $\nabla {\rm Ric} \neq 0$, Proposition \ref{prop:DRic-U} yields a point $p\in M$ such that $U_p \neq 0$. We then set $(V,\Omega) = (T_pM, \omega_p)$ and consider the subspace $\mathcal{S}$ of $\mathfrak{sp}(V,\Omega)$ defined via \eqref{eqn:subspace-S} for $u=U_p$. It is clear from \eqref{eqn:S-annihilates-W} that $\mathcal{S} \cdot W_p = 0$, so that \mbox{(\ref{eqn:basic-derivations}-ii)} and Proposition \ref{prop:S-generates-spV} together lead to $\mathfrak{sp}(V,\Omega) \cdot W_p = 0$. Proposition \ref{prop:killed-by-derivations} then implies that $W_p=0$ and, as $\nabla W=0$ and $M$ is connected, it finally follows that $W=0$. $\hfill\square$

\bibliography{symplectic_refs}{}
\bibliographystyle{plain}

\end{document}